\xdef\@endgadget#1{{\unskip\nobreak\hfil\penalty50\hskip1em\hbox{}\nobreak
    \hfil#1\parfillskip=0pt\finalhyphendemerits=0\par}}
\def\@qedsymbol{${}_\blacksquare$}
\def\qed{\@endgadget{\@qedsymbol}}
\newtheorem{lemma}{Lemma}[section]
\newtheorem{example}[lemma]{Example}
\newtheorem{definition}[lemma]{Definition}
\newtheorem{proposition}[lemma]{Proposition}
\newtheorem{remark}[lemma]{Remark}
\newcommand{\mR}{\mathbb{R}}
\newcommand{\pperp}{\perp \!\!\!\perp}
\newcommand{\bq}{\begin{equation}}
\newcommand{\eq}{\end{equation}}
\DeclareMathOperator{\im}{im} 
\DeclareMathOperator{\rank}{rank}
\newcommand{\X}{\mathcal{X}}
\newcommand{\D}{\mathcal{D}}
\newcommand{\M}{\mathcal{M}}
\newcommand{\Z}{\mathcal{Z}}
\renewcommand{\L}{\mathcal{L}}
\def\BibTeX{{\rm B\kern-.05em{\sc i\kern-.025em b}\kern-.08em
    T\kern-.1667em\lower.7ex\hbox{E}\kern-.125emX}}
\title{\LARGE \bf Generalized Port-Hamiltonian DAE Systems\footnote{Submitted for publication.}}
\author{Arjan van der Schaft$^{1}$ and Bernhard Maschke$^{2}$
\thanks{$^{1}$Arjan van der Schaft is with the Bernoulli Institute for Mathematics, CS and AI,
        University of Groningen,  Groningen, the Netherlands, 
        {\tt\small a.j.van.der.schaft@rug.nl}}%
\thanks{$^{2}$Bernhard Maschke is with the Universit\'e Claude Bernard Lyon 1, CNRS, LAGEP, France, 
{\tt\small bernhard.maschke@univ-lyon1.fr}}%
}
\begin{document}

\maketitle
\thispagestyle{empty}
\pagestyle{empty}

\begin{abstract}
Motivated by recent work in this area we expand on a generalization of port-Hamiltonian systems that is obtained by replacing the Hamiltonian function representing energy storage by a general Lagrangian subspace. This leads to a new class of algebraic constraints in physical systems modeling, and to an interesting class of DAE systems. It is shown how constant Dirac structures and Lagrangian subspaces allow for similar representations, and how this leads to descriptions of the DAE systems entailing generalized Lagrange multipliers.
\end{abstract}

%

\section{Introduction}
It is well-known \cite{gsbm, phDAE, passivitybook} that port-Hamiltonian system dynamics may exhibit {\it algebraic constraints} in the state variables, leading to mixtures of {\it differential and algebraic equations} (DAEs). From a network modeling perspective algebraic constraints arise from interconnection of the subsystems composing the overall system. Within a standard port-Hamiltonian formulation the existence of such algebraic constraints is reflected in the properties of the underlying {\it Dirac structure} of the system. This Dirac structure is determined by the composition of Dirac structures of the subsystems, and needs not be a mapping from the effort variables to the flow variables, but instead a {\it relation} between them; see e.g. \cite{dalsmo, passivitybook, NOW} for more details. In this latter case, there are constraints between the co-energy variables of the system, which, via the Hamiltonian function, translate into algebraic constraints in the (energy) state variables. Examples include kinematic constraints in mechanical systems, and voltage or current constraints in electrical circuits.

On the other hand it was recently observed in \cite{bmxz}, see also the subsequent work \cite{mmw, mms, bmd, scholz}, that by {\it generalizing} the definition of linear port-Hamiltonian systems algebraic constraints may arise in {\it different} ways as well. At the same time in \cite{barbero}, motivated primarily by considerations in the geometric formulation of Lagrangian systems, systems with kinematic constraints, and optimal control, the definition of port-Hamiltonian systems was generalized by replacing the gradient of the Hamiltonian function in the port-Hamiltonian dynamics by a Lagrangian submanifold which is {\it not} necessarily the graph of the gradient of a Hamiltonian. This leads to algebraic constraints in the state variables which are of a different nature than those originating from effort constraints corresponding to Dirac structures.

In the present paper we will elaborate on the algebraic constraints of generalized port-Hamiltonian systems defined by Dirac structures as well as by Lagrangian subspaces; thus elucidating and complementing earlier contributions. For simplicity of exposition we will concentrate on {\it linear time-invariant finite-dimensional systems}, and moreover on the {\it lossless} case (no energy-dissipation) without external variables (inputs/outputs). For developments concerning {\it time-varying} linear generalized pH DAE systems or infinite-dimensional linear pH DAE systems we refer to \cite{mmw, bmxz}, and for the nonlinear case to \cite{barbero}. 

Conceptually, the current paper is closest to \cite{barbero} by emphasizing the geometric definition of generalized port-Hamiltonian systems as pairs of a Dirac structure and a Lagrangian subspace, while some constructions (as well as the emphasis on the linear case) are inspired by \cite{bmxz}. The paper is structured as follows. In Section II we give the geometric definition of linear generalized port-Hamiltonian DAE systems (without energy-dissipation and external variables), entailing algebraic constraints due to the Dirac structure as well as to the Lagrangian subspace. Inspired by \cite{bmxz} we give an explicit coordinate representation in terms of a parametrizing state vector. The end of Section II provides a number of simple, but illustrative, examples of algebraic constraints corresponding to either the Dirac structure or to the Lagrangian subspace. In Section III we zoom in on algebraic constraints and the underlying geometry of Dirac structures and Lagrangian subspaces. We show, and illustrate by an example, how algebraic constraints may be resolved by the use of extended state spaces, corresponding to the introduction of generalized Lagrange multipliers. Section V contains the conclusions and hints for extensions.

\section{Definition of generalized port-Hamiltonian DAE systems}
An {\it unconstrained} linear lossless port-Hamiltonian system without external variables on an $n$-dimensional linear state space $\mathcal{X}$ is described by the system of ordinary differential equations (ODEs)
\bq
\label{PH}
\dot{x} = JQx,
\eq
where $J: \X^* \to \X, J=-J^T,$ is a skew-symmetric mapping (also called Poisson structure map), and the symmetric matrix $Q=Q^T$ defines a {\it Hamiltonian function} $H(x)=\frac{1}{2}x^TQx$. 
Obviously by skew-symmetry of $J$
\bq
\label{EC}
\frac{d}{dt}H(x)= x^TQJQx=0,
\eq
expressing energy-conservation. 
On the other hand, in network modeling of physical systems, the dynamics is {\it not} always in ODE form \eqref{PH}, but instead involves {\it algebraic equations} in the state vector $x$. This was formalized in the standard definition of a port-Hamiltonian system by generalizing the skew-symmetric map $J$ to a general {\it Dirac structure}, defined as follows \cite{vanderschaftmaschkearchive, dalsmo, passivitybook}. Consider the product $\X \times \X^*$, with projections $\pi:  \X \times \X^* \to \X$ and $\pi^*:  \X \times \X^* \to \X^*$. Define on $\X \times \X^*$ the bilinear form
\bq
\label{+}
\langle (f_1,e_1), (f_2,e_2) \rangle_+ := 
<e_1 \mid f_2> +  <e_2 \mid f_1>,
\eq
with $(f_i,e_i) \in \X \times \X^*, i=1,2$ and $<e \mid f>$ denoting the duality product between $f \in \X$ and $e \in \X^*$.
\begin{definition}[Constant Dirac structure \cite{courant}]
A Dirac structure is a subspace $\D \subset \X \times \X^*$ such that $\D = \D^{\pperp_+}$, where ${}^{\pperp_+}$ denotes the orthogonal companion with respect to the bilinear form $\langle \cdot, \cdot \rangle_+$.
\end{definition} 
\begin{remark} \cite{NOW, passivitybook} An equivalent definition of a Dirac structure can be stated as follows. A subspace $\D \subset \X \times \X^*$ is a Dirac structure iff $\langle \cdot, \cdot \rangle_+$ restricted to $\D$ is {\it zero}, and $\D$ is {\it maximal} with respect to this property. The dimension of any Dirac structure $\D \subset \X \times \X^*$ is equal to $\dim \X$. Furthermore, by taking $f_1=f_2=f, e_1=e_2=e$ in \eqref{+} it follows that $<e \mid f>=0$ for any $(f,e) \in \D$, expressing power conservation, and generalizing skew-symmetry.
\end{remark}
A linear {\it port-Hamiltonian DAE system} with Hamiltonian $H(x)=\frac{1}{2}x^TQx$, briefly pH DAE system, is now geometrically given as\footnote{Substitute $f=-\dot{x},e=Qx$. The minus sign in $f=-\dot{x}$ ensures consistent power flow sign convention.}
\bq
\label{dynPH}
(-\dot{x},Qx) =(f,e) \in \D
\eq
Note that the graph of any skew-symmetric map $-J$
\bq
\label{Dgraph}
\D_J := \{(f=-Je,e) \in \X \times \X^* \mid e \in \X^* \}
\eq
is a special type of Dirac structure. In fact, a Dirac structure $\D$ can be represented into the form \eqref{Dgraph} for some skew-symmetric $J$ if and only if $\pi^* (\D) = \X^*$. On the other hand, if $\pi^* (\D) \neq \X^*$ then the dynamics \eqref{dynPH} gives rise to the algebraic constraints in the state $x$ given as
\bq
e=Qx \in \pi^* (\D)
\eq
This type of algebraic constraints will be referred to as {\it Dirac algebraic constraints}. They arise as constraints on the variables $e=\nabla H(x)$, called in port-based modeling terminology the {\it co-energy} variables (as opposed to the {\it energy} variables $x$; i.e., the variables in which the energy is expressed). Through the specification of the Hamiltonian $H(x)=\frac{1}{2}x^TQx$ they translate into algebraic constraints on the energy variables $x$ given as $Qx \in \D$.
A Dirac structure $\D$ with $\pi^* (\D) \neq \X^*$ will be referred to as a {\it singular} Dirac structure, and if $\pi^* (\D) = \X^*$ as a {\it regular} Dirac structure.

\medskip

Recently, and from different points of view \cite{bmxz, barbero}, it was noted that a {\it second type} of algebraic constraints can be formulated by generalizing the {\it Hamiltonian function} $H(x)=\frac{1}{2}x^TQx$ to a {\it Lagrangian subspace} of $\X \times \X^*$. This latter notion is defined as follows, resembling\footnote{It should be noted that the definitions of Lagrangian subspaces and Dirac structures {\it diverge} in the nonlinear case, with Dirac structures on a manifold $\X$ still defining pointwise a linear subspace of the product $T_x\X \times T_x^*\X, x \in \X$, while Lagrangian subspaces generalize to Lagrangian {\it submanifolds} of the cotangent bundle $T^*\X$.} the previous definition of a Dirac structure. Consider on $\X \times \X^*$ the alternate bilinear form
\bq
\langle (x_1,e_1), (x_2,e_2) \rangle_- := <e_1 \mid x_2> -  <e_2 \mid x_1>,
\eq
with $(x_i,e_i) \in \X \times \X^*, i=1,2$.
\begin{definition}[Lagrangian subspace]
A Lagrangian subspace is a subspace $\L \subset \X \times \X^*$ such that $\L = \L^{\pperp_-}$, where ${}^{\pperp_-}$ denotes the orthogonal companion with respect to the bilinear form $\langle \cdot, \cdot \rangle_-$.
\end{definition} 
\begin{remark} Alternatively, a Lagrangian subspace is defined as a {\it maximal} subspace $\L \subset \X \times \X^*$ on which $\langle \cdot, \cdot \rangle_-$ is zero. Similarly to Dirac structures, the dimension of any Lagrangian subspace $\L \subset \X \times \X^*$ is equal to $n=\dim \X$. 
\end{remark}
Note that the gradient of the Hamiltonian $H(x)=\frac{1}{2}x^TQx$ defines the special type of Lagrangian subspace
\bq
\label{Lgraph}
\L_Q := \{(x, Qx) \in \X \times \X^* \mid x \in \X \},
\eq
i.e., the graph of the symmetric mapping $Q$.
Furthermore, a Lagrangian subspace $\L$ can be put into the form \eqref{Lgraph} for a certain symmetric $Q$ if and only if $\pi (\L) = \X$, while if $\pi (\L) \neq  \X$ then the following algebraic constraints in the state $x$ arise
\bq
x \in \pi (\L)
\eq
This type of algebraic constraints will be referred to as {\it Lagrange algebraic constraints}, since they are determined by the Lagrangian subspace $\L$. A Lagrangian subspace $\L$ with $\pi (\L) \neq \X$ will be referred to as a {\it singular} Lagrangian subspace, and if $\pi (\L) = \X$ as a {\it regular} Lagrangian subspace.
\begin{definition}[Generalized pH DAE system]
Consider a Dirac structure $\D \subset \X \times \X^*$ and a Lagrangian subspace $\L \subset \X \times \X^*$. This defines the {\it generalized port-Hamiltonian DAE system} (briefly, gpH DAE system)  $(\D,\L)$, with dynamics given by
\bq
\label{d}
(-\dot{x},e) \in \D, \; (x,e) \in \L
\eq
\end{definition}
Here \eqref{d} should be read as follows. Consider any $x \in \X$ for which there exist $e \in \X^*$ and $f \in \X$ such that $(x,e) \in \L$ and $(f,e) \in \D$. Then\footnote{This is the reason for distinguishing between $x \in \X$ and $f \in \X$; strictly speaking $f$ is in the {\it tangent space} to $\X$ at $x$, which however by linearity of $\X$ can be identified with $\X$.} minus the velocity $-\dot{x}$ is given as any such $f$.

A {\it coordinate representation} of the dynamics \eqref{d} of the gpH DAE system $(\D,\L)$ can be obtained as follows. 
As shown in \cite{dalsmo, courant}, any Dirac structure $\D \subset \X \times \X^*$ for an $n$-dimensional linear space $\X$ can be represented in kernel representation as
\bq
\label{dk}
\D=\{(f,e) \in \X \times \X^* \mid Kf + Le=0 \}
\eq
for $n \times n$ matrices $K, L$ satisfying
\bq
\label{dk1}
KL^T + LK^T =0, \; \rank \begin{bmatrix} K & L \end{bmatrix} =n
\eq
Analogously, see Appendix Proposition \ref{A} for a proof, any Lagrangian subspace admits a kernel representation
\bq
\label{kernel1}
\L=\{(x,e) \in \X \times \X^* \mid S^Tx - P^Te=0 \}
\eq
for $n \times n$ matrices $P,S$ satisfying
\bq
\label{sp1}
S^TP = P^TS , \; \rank \begin{bmatrix} S^T & P^T \end{bmatrix} =n
\eq
Equivalently, the Lagrangian subspace $\L$ can be represented in image representation as
\bq
\label{image}
\L=\{(x,e) \in \X \times \X^* \mid \exists z \in \Z=\mR^n \mbox{ s.t. }\begin{bmatrix} x \\ e \end{bmatrix} = \begin{bmatrix} P \\ S \end{bmatrix}z \}
\eq
It follows that the dynamics of the gpH DAE system defined by the pair $(\D, \L)$ is obtained by setting $f= -\dot{x}$ in \eqref{dk}, yielding $K\dot{x}=Le$ with $(x,e) \in \L$. Using the image representation \eqref{image} of $\L$ this implies the following DAE system in the parametrizing state vector $z \in \Z$
\bq
\label{pHDAE}
KP \dot{z} = LSz
\eq
In case of Lagrange algebraic constraints the matrix $P$ is not of full rank, inducing algebraic constraints in $z$, while in case of Dirac algebraic constraints the matrix $K$ is not of full rank; also inducing algebraic constraints.

A Hamiltonian function for the coordinate representation \eqref{pHDAE}, in terms of the parametrizing state vector $z$, is defined by
\bq
H(z) := \frac{1}{2} z^T S^TP z
\eq
(note that $S^TP=P^TS$ by \eqref{sp1}). In fact, along solutions of \eqref{pHDAE}
\bq
\frac{d}{dt}H(z)= z^T S^TP\dot{z} = e^T \dot{x}=0,
\eq
since $e^Tf=0$ for all $(f,e) \in \D$. 

Note that there is much freedom in the definition of the parametrizing state vector $z$. On the other hand, \eqref{image} shows that $z$ can be taken to be a mixture of the $x$ and $e$ variables; i.e., a mixture of {\it energy} and {\it co-energy} variables. This can be formalized as follows. Consider any Lagrangian subspace $\L \subset \X \times X^*$. Then, see Appendix Proposition \ref{C} for a proof, there always exists a sub-vector $x_1$ of $x \in \X$, and a complementary sub-vector $e_2 \in \X^*$, such that $\L$ is represented as
\bq
\L=\{(x,e) \in \X \times \X^* \mid \begin{bmatrix} e_1 \\ x_2 \end{bmatrix} = \widehat{Q} \begin{bmatrix} x_1 \\ e_2 \end{bmatrix} \}
\eq
Particular cases are $x_1=x$ and $e_2$ void, in which case $\widehat{Q}=Q$, or $e_2=e$ and $x_1$ void, in which case $\widehat{Q}=Q^{-1}$ if $Q$ is invertible, and the {\it co-energy function} $\frac{1}{2}e^TQ^{-1}e$ is the Legendre transform of $H(x)=\frac{1}{2}x^TQx$.

An alternative, and in some sense {\it dual}, coordinate representation of a generalized port-Hamiltonian DAE system $(\D,\L)$ can be obtained as follows.
Consider based on \eqref{dk} and \eqref{dk1} the {\it image} representation of $\D$ given as  
\bq
\D = \im \begin{bmatrix} L^T \\ K^T \end{bmatrix},
\eq
and the {\it kernel representation} \eqref{kernel1} of $\L$. Substitution of $-\dot{x}=f=L^Tv, e=K^Tv$, with $v$ an alternative parametrizing state vector, then leads to the DAEs
\bq
\label{pHDAEa}
S^TL^Tv + P^TK^T\dot{v}=0
\eq
By pre-multiplying \eqref{pHDAEa} by $z^T$, and performing integration by parts on the second term $z^TP^TK^T\dot{v}$, this results in the previously obtained coordinate expression \eqref{pHDAE} in the parametrizing state vector $z$. Thus \eqref{pHDAEa} can be considered as {\it a dual} (or {\it adjoint}) representation to \eqref{pHDAE}.

\subsection{Examples}
Dirac algebraic constraints of port-Hamiltonian systems arise from from the interconnection of subsystems. On the other hand, Lagrange algebraic constraints reflect degeneracies in the definition of energy-storage. This is illustrated by the following examples. The first two are standard examples of Dirac algebraic constraints, while the last three show how Lagrange algebraic constraints arise in physical systems modeling.

\begin{example}[Mechanical systems with kinematic constraints]
\label{ex1}
Consider a mechanical system with position coordinates $q \in \mR^n$, momenta $p=M\dot{q} \in \mR^n$, and mass matrix $M=M^T>0$, subject to constant
kinematic constraints $A^T\dot{q}=0$, where $A$ is an $n \times k$ matrix. Consider a Hamiltonian function $H(q,p)= \frac{1}{2}p^TM^{-1}p + \frac{1}{2}q^TKq$ with $K$ a matrix defining the elastic potential energy, specifying a Lagrangian subspace as in \eqref{Lgraph}. The Dirac structure $\D$ is given as
\[
\begin{array}{rcl}
\D & = &\{(f_q,f_p,e_q,e_p) \in \mR^{2n} \times \mR^{2n} \mid \exists \lambda \in \mR^k \mbox{ s.t. } \\[3mm]
&& \begin{bmatrix} f_q \\ f_p \end{bmatrix} =
\begin{bmatrix} 0_n & I_n \\ -I_n & 0_n \end{bmatrix} \begin{bmatrix} e_q \\ e_p \end{bmatrix}  + \begin{bmatrix} 0 \\ A \end{bmatrix} \lambda, \; A^T e_p=0 \}
\end{array}
\]
Substitution of $e_p=M^{-1}p$ leads to the algebraic constraints $A^T M^{-1}p=0$. Furthermore, $A\lambda$ is the vector of constraint forces.
\end{example}

\begin{example}[LC-circuits]
Dirac algebraic constraints are ubiquitous in electrical circuits. Concentrating on $LC$-circuits, such constraints arise in two ways. The first case corresponds to the occurrence of a cycle in the circuit graph whose edges only contain {\it capacitors}. By Kirchhoff's voltage law the sum of the voltages across these capacitors is identically zero, leading in the port-Hamiltonian formulation to an algebraic constraint between the charges of those capacitors. The second case corresponds to the existence of a node in the circuit graph whose adjacent edges only contain {\it inductors}. By Kirchhoff's current law the sum of the currents entering this node is equal to zero, thus leading to an algebraic constraint between the flux linkages of those inductors.
\end{example}

\begin{example}[Mass-spring system with zero mass]
Consider a mass-spring system with standard Hamiltonian
$\widehat{H}(q,p)= \frac{1}{2}kq^2 + \frac{p^2}{2m}$, with $m$ mass and $k$ the spring constant.
Now let $m$ converge to zero, leading to the constraint $p=0$. For $m\neq 0$ the graph defined by $\nabla \widehat{H}$ is given as
\[
\begin{bmatrix} q \\ p \\ e_q \\ e_p \end{bmatrix} =
\begin{bmatrix} 1 & 0 \\ 0 & m \\ k & 0 \\ 0 & 1 \end{bmatrix}
\begin{bmatrix} z_1 \\ z_2 \end{bmatrix}
\]
with $z_1=q$ the {\it position} of the mass (an energy variable), and $z_2=\frac{p}{m}$ its {\it velocity} (a co-energy variable). 
Taking the limit $m \to 0$ one obtains the degenerate Lagrangian subspace
\[
\begin{bmatrix} q \\ p \\ e_q \\ e_p \end{bmatrix} =
\begin{bmatrix} 1 & 0 \\ 0 & 0 \\ k & 0 \\ 0 & 1 \end{bmatrix}
\begin{bmatrix} z_1 \\ z_2 \end{bmatrix}
\]
Hence $z_1=q, z_2=e_p$, and we obtain the gpH DAE system
\[
\begin{bmatrix} 1 & 0 \\ 0 & 0 \end{bmatrix} 
\begin{bmatrix} \dot{z}_1 \\ \dot{z}_2 \end{bmatrix} =
\begin{bmatrix} 0 & 1 \\ - k & 0 \end{bmatrix} 
\begin{bmatrix} z_1 \\ z_2 \end{bmatrix} 
\]
with Hamiltonian $H(z_1,z_2) = \frac{1}{2} kz_1^2$. 
(Note that in this simple example the remaining system is trivial, and necessarily $z_1=q=0$ whenever $k \neq 0$.)
\end{example}

\begin{example}[Mechanical systems with strong constraining force]
\label{ex4}
Consider a two-dimensional mass-spring system with Hamiltonian $\widehat{H}\left(q_{1},q_{2},p_{1},p_{2}\right)$
\[
\widehat{H}=\frac{1}{2} k_{1}q_{1}^{2}+ \frac{1}{2} k_{12}\left(q_{2}-q_{1}\right)^{2}+\frac{1}{2m_1}p_{1}^{2}+ \frac{1}{2m_2}p_{2}^{2}
\]
being the series interconnection of two masses $m_1,m_2$ and two springs with spring constants $k_1, k_{12}$. This defines the Lagrangian
subspace given in image representation as
\[
\begin{bmatrix}
q_{1}\\
q_{2}\\
p_{1}\\
p_{2}\\
e_{q_{1}}\\
e_{q_{2}}\\
e_{p_{1}}\\
e_{p_{1}}
\end{bmatrix}
=\begin{bmatrix}
1 & 0 & 0 & 0\\
1 & \frac{1}{k_{12}} & 0 & 0\\
0 & 0 & 1 & 0\\
0 & 0 & 0 & 1\\
k_{1} & -1 & 0 & 0\\
0 & 1 & 0 & 0\\
0 & 0 & \frac{1}{m_1} & 0\\
0 & 0 & 0 & \frac{1}{m_2}
\end{bmatrix}
\begin{bmatrix}
z_{1}\\
z_{2}\\
z_{3}\\
z_{4}
\end{bmatrix},
\]
where we have chosen the parametrizing state vector $z$ as the following mixture of energy and co-energy variables:
\[
z_1=q_1, \; z_2=k_{12}(q_2-q_1), \: z_3=p_1, \: z_4=p_2
\]
(thus $z_2$ equals the elastic force of the second spring). The Hamiltonian expressed in the $z$-vector is given as
\[
\widehat{H}\left(z\right)=\frac{1}{2} k_{1}q_{1}^{2}+\frac{1}{2k_{12}}z_{2}^{2}+\frac{1}{2m_1}z_{3}^{2}+ \frac{1}{2m_2}z_{4}^{2}
\]
Letting $k_{12} \to \infty$ (corresponding to replacing the second spring by a {\it rigid rod}) yields the degenerate Lagrangian subspace
\[
\begin{bmatrix}
q_{1}\\
q_{2}\\
p_{1}\\
p_{2}\\
e_{q_{1}}\\
e_{q_{2}}\\
e_{p_{1}}\\
e_{p_{1}}
\end{bmatrix}=
\begin{bmatrix}
1 & 0 & 0 & 0\\
1 & 0 & 0 & 0\\
0 & 0 & 1 & 0\\
0 & 0 & 0 & 1\\
k_{1} & -1 & 0 & 0\\
0 & 1 & 0 & 0\\
0 & 0 & \frac{1}{m_1} & 0\\
0 & 0 & 0 & \frac{1}{m_2}
\end{bmatrix}
\begin{bmatrix}
z_{1}\\
z_{2}\\
z_{3}\\
z_{4}
\end{bmatrix}
\]
entailing the algebraic constraint\footnote{This can be called a {\it geometric constraint}, although the set-up is {\it different} from the standard approach to geometric constraints following from the {\it integration} of kinematic constraints $A^T\dot{q}=0$ as in Example \ref{ex1} to $A^Tq =c$, with the vector $c$ determined by the initial condition of the system.} $q_1=q_2$, and 
leads to the following gpH DAE system
\[
\begin{array}{rcl}
\begin{bmatrix}
1 & 0 & 0 & 0 \\
1 & 0 & 0 & 0 \\
0 & 0 & 1 & 0 \\
0 & 0 & 0 & 1
\end{bmatrix}
\begin{bmatrix}
\dot{z}_1 \\ \dot{z}_2 \\ \dot{z}_3 \\ \dot{z}_4 
\end{bmatrix}
& = &
\begin{bmatrix}
0 & 0 & 1 & 0 \\
0 & 0 & 0 & 1 \\
-1 & 0 & 0 & 0 \\
0 & -1 & 0 & 0
\end{bmatrix} \cdot \\
&&
\begin{bmatrix}
k_1 & -1 & 0 & 0 \\
0 & 1 & 0 & 0 \\
0 & 0 & \frac{1}{m_1} & 0 \\
0 & 0 & 0 & \frac{1}{m_2}
\end{bmatrix}
\begin{bmatrix}
{z}_1 \\ {z}_2 \\ {z}_3 \\ {z}_4 
\end{bmatrix}
\end{array}
\]
with algebraic constraint $\frac{z_3}{m_1} = \frac{z_4}{m_2}$ (equality of velocity of the first and the second mass, linked by a rigid rod). Note that $z_2$ (whose derivative does not appear in the DAE system) represents the constraint force exerted by the rigid rod on the masses $m_1$ and $m_2$ (with opposite sign).
\end{example}

\begin{example}[Ideal transformer]
An electrical transformer is a magnetic energy storage element consisting of two coils coupled by
a magnetic core. Its constitutive relations define a Lagrangian subspace
in kernel representation 
\[
\label{ex1}
S^{T}\begin{bmatrix}
\varphi_{1}\\
\varphi_{2}
\end{bmatrix}=
P^{T} \begin{bmatrix}
i_{1}\\
i_{2}
\end{bmatrix}
\]
in the magnetic fluxes $\varphi_1, \varphi_2$ and currents $i_{1}, i_2$ corresponding to the two coils.
Here $S=\left(\frac{\mathcal{R}_{m}}{N_{1}N_{2}}\right)I_{2}$, with $I_2$ the $2 \times 2$ identity matrix, and 
\[
P=\begin{bmatrix}
\frac{N_{1}}{N_{2}}\left(1+\frac{\mathcal{R}_{m}}{\mathcal{R}_{l1}}\right) & 1\\
1 & \frac{N_{2}}{N_{1}}\left(1+\frac{\mathcal{R}_{m}}{\mathcal{R}_{l2}}\right)
\end{bmatrix}
\]
with reluctances $\mathcal{R}_{l1}$, $\mathcal{R}_{l2}$ and $\mathcal{R}_{m}$,
and $N_{1},N_{2}$ the number of turns of the two coils. For an \emph{ideal} transformer, $\frac{\mathcal{R}_{m}}{\mathcal{R}_{li}}\rightarrow0$ for $i=1,2$, in which case the rank of the matrix $P$ drops from $2$ to $1$, leading to Lagrange algebraic constraints and the well-known transformer ratio.

Now connect the transformer at port $1$ to a capacitor $q=C\,v_{C}$ and at port $2$ to an inductor $\Phi =L\, i_L$.
Adding the constitutive relations of the capacitor and inductor one obtains the extended Lagrangian subspace $\mathcal{L}_{tot}$ with kernel representation
\[
P_{tot}=\textrm{diag} \left( 
\begin{bmatrix}
\frac{N_{1}}{N_{2}} & 1\\
1 & \frac{N_{2}}{N_{1}} \end{bmatrix}, \,
\begin{bmatrix} C & 0 \\ 0 & L \end{bmatrix} \right)
, \; S_{tot}=\textrm{diag}\left(S,\,I_2 \right)
\]
in the energy variables $\varphi_{1}, \varphi_{2}, q, \Phi,$ and co-energy variables $v_{1}, v_{2}, i_{C},v_L$.
The total Dirac structure $\D_{tot}$ of the system is given by the matrices $K_{tot}=I_4$ and  
\[
L_{tot}= \begin{bmatrix}
0 & 0 & 1 & 0 \\
0 & 0 & 0 & -1 \\
-1 & 0 & 0 & 0 \\
0 & 1 & 0 & 0 
\end{bmatrix}
\]
This defines a gpH DAE system given as in \eqref{pHDAE} or \eqref{pHDAEa}.
\end{example}

\section{Algebraic constraint representations}
In this section we further analyze Dirac and Lagrange algebraic constraints.
First we elaborate on different representations of them.

Consider a Dirac structure $\D \subset \X \times \X^*$. Denote as before by $\pi: \X \times \X^* \to \X$ the projection on $\X$, and by $\pi^*: \X \times \X^* \to \X^*$ the projection on $\X^*$. Then following \cite{courant} the bilinear form $\Delta$ on the subspace $\pi^*(\D) \subset \X^*$ given as
\bq
\Delta (\pi^*(v),\pi^*(w)) := <\pi^*(v) \mid \pi (w)>, \quad v,w \in \D
\eq
is well-defined and skew-symmetric.
Conversely, it can be shown that any skew-symmetric form on a subspace of $\X^*$ defines a Dirac structure $\D$. Thus Dirac structures are in one-to-one correspondence with skew-symmetric forms defined on subspaces of $\X^*$. Furthermore, it follows \cite{dalsmo} that any Dirac structure $\D \subset \X \times \X^*$ can be {\it embedded} into the graph of a skew-symmetric map 
on an extended state space as follows. Consider any $\D \subset \X \times \X^*$, and suppose $\pi^*(\D) \subset \X^*$ is $(n-k)$-dimensional. Define $\Lambda:= \mR^k$. Then there exists a full-rank $n \times k$ matrix $G$ and a skew-symmetric $n \times n$ matrix $J$ such that $\D$ is given as the set of all points $(f,e) \in \X \times \X^*$ satisfying for some $\lambda \in \Lambda$
\bq
\label{Diracconstraint}
\begin{array}{rcl}
-f &= &Je + G\lambda \\[2mm]
0 & = & G^Te
\end{array}
\eq
Conversely, any such equations for a skew-symmetric map $J: \X^* \to \X$ define a Dirac structure.
Hence Dirac structures correspond to {\it extended skew-symmetric maps}
\bq
\label{extskew}
\widetilde{J}= \begin{bmatrix} -J & -G \\ G^T & 0 \end{bmatrix} : \X^* \times \Lambda^* \to \X \times \Lambda
\eq
The subspace $\D \cap (0 \times \X^*)$ corresponds to the {\it conserved quantities} of the corresponding gpH DAE system \cite{NOW, passivitybook}, while $\pi^*(\D)$ defines the Dirac algebraic constraints.
%

Analogously, cf. Appendix Proposition \ref{B}, any Lagrangian subspace $\L \subset \X \times \X^*$ gives rise to the well-defined {\it symmetric} bilinear form on $\pi (\L)$
\bq
\Sigma (\pi(v),\pi(w)) := <\pi^*(v) \mid \pi (w)>, \quad v,w \in \L
\eq
Conversely any symmetric bilinear form on a subspace of $\X$ defines a Lagrangian subspace $\L$. Thus Lagrangian subspaces are in one-to-one correspondence with symmetric forms defined on subspaces of $\X$. 

Furthermore, analogously to the Dirac structure case, cf. Appendix Proposition \ref{B}, any Lagrangian subspace can be embedded into the graph of a symmetric mapping on an extended space. Indeed, for any Lagrangian subspace $\L$ there exists full-rank $n \times k$ matrix $M$ and a symmetric $n \times n$ matrix $Q$ such that $\L$ is given as the set of all points $(x,e) \in \X \times \X^*$ satisfying for some $\mu \in \M :=\mR^k$
\bq
\label{Lagrangeconstraint}
\begin{array}{rcl}
e &= & Qx + M\mu, \quad Q=Q^T \\[2mm]
0 & = & M^Tx
\end{array}
\eq
In fact, $M$ is such that $\ker M^T = \pi (\L)$. Hence Lagrangian subspaces correspond to {\it extended symmetric maps}
\bq
\label{extsym}
\widetilde{Q}= \begin{bmatrix} Q & M \\ M^T & 0 \end{bmatrix} : \X \times \M \to \X^* \times \M^*
\eq
Note that one can associate with \eqref{Lagrangeconstraint} the constrained optimization problem of extremizing $\frac{1}{2}x^TQx$ under the constraint $M^Tx=0$, or, using Lagrange multipliers, the unconstrained optimization
\bq
\mathrm{ext}_{x, \mu} \frac{1}{2}x^TQx + \mu^TM^Tx
\eq
with $\pi (\L)$ the constrained state space, and $\L \cap (\X \times 0)$ the set of constrained extrema.

These results can be employed for an extended representation of gpH DAE systems as follows. Consider a pH DAE system with only Dirac algebraic constraints, i.e., $P=I,S=Q,$ with Hamiltonian function $H(x)=\frac{1}{2}x^TQx$. Consider the representation of the Dirac algebraic constraints given in \eqref{Diracconstraint}, with extended state space $\X \times \Lambda$. Then define the singular Lagrangian subspace $\widetilde{\L} \subset \X \times \Lambda \times \X^* \times \Lambda^*$ specified by
\bq
\widetilde{P}:= \begin{bmatrix} I & 0 \\ 0 & 0 \end{bmatrix}, \; 
\widetilde{S}:= \begin{bmatrix} Q & 0 \\ 0 & I \end{bmatrix}
\eq
This corresponds to the parametrizing extended state vector $\tilde{z} = \begin{bmatrix} x \\ \lambda \end{bmatrix}$, and a Hamiltonian $\widetilde{H}(\tilde{z})$ given as
\bq
\widetilde{H}(\tilde{z}) = \frac{1}{2}\tilde{z}^T\widetilde{S}^T \widetilde{P}\tilde{z}= \frac{1}{2}x^TQx
\eq
(thus reducing in value to the original Hamiltonian function). The resulting gpH DAE system is given as
\bq
\label{DAEe}
\begin{bmatrix} I & 0 \\ 0 & 0 \end{bmatrix}
\begin{bmatrix} \dot{x} \\ \dot{\lambda} \end{bmatrix} =
\begin{bmatrix} J & G \\ -G^T & 0 \end{bmatrix}
\begin{bmatrix} Q & 0 \\ 0 & I \end{bmatrix}
\begin{bmatrix} x \\ \lambda \end{bmatrix} 
\eq
It is directly checked that any solution of \eqref{DAEe} projects to a solution of the original pH DAE system, and conversely any solution of the original ph DAE system is the projection of a solution of \eqref{DAEe}. Thus the pH DAE system with {\it singular} Dirac structure and {\it regular} Lagrangian subspace has been converted into a gpH DAE system in the extended state vector $\tilde{z}$ with {\it regular} Dirac structure but {\it singular} Lagrangian subspace. In this sense, {\it Dirac algebraic constraints} may be replaced by {\it Lagrange algebraic constraints}. This underlies some of the examples in \cite{bmxz}.

Dually, let us consider a gpH DAE system with only {\it Lagrange algebraic constraints}, with Dirac structure given by $K=I, L=J$. Consider the representation of the Lagrange algebraic constraints given in \eqref{Lagrangeconstraint}, i.e., 
\bq 
\widetilde{Q}:= \begin{bmatrix} Q & M \\ M^T & 0 \end{bmatrix} , \; \widetilde{P}:= \begin{bmatrix} I & 0 \\ 0 & I \end{bmatrix} 
\eq
This corresponds to a Hamiltonian $\widetilde{H}(\tilde{x})$ with $\tilde{x} = \begin{bmatrix} x \\ \mu \end{bmatrix}$ on the extended state space $\X \times \M$ given as
\bq
\widetilde{H}(\tilde{x}) = \frac{1}{2}x^TQx + x^TM\mu
\eq
Then define on this extended state space the singular Dirac structure $\widetilde{\D} \subset \X \times \M \times \X^* \times \M^*$ specified by
\bq
\widetilde{K}:= \begin{bmatrix} I & 0 \\ 0 & 0 \end{bmatrix}, \; 
\widetilde{L}:= \begin{bmatrix} J & 0 \\ 0 & I \end{bmatrix}
\eq
The resulting pH DAE system is given as
\bq
\begin{bmatrix} I & 0 \\ 0 & 0 \end{bmatrix}
\begin{bmatrix} \dot{x} \\ \dot{\mu} \end{bmatrix} =
\begin{bmatrix} J & 0 \\ 0 & I \end{bmatrix}
\begin{bmatrix} Q & M \\ M^T & 0 \end{bmatrix}
\begin{bmatrix} x \\ \mu \end{bmatrix} 
\eq
Thus dually we converted the gpH DAE system with {\it regular} Dirac structure and {\it singular} Lagrangian subspace into a pH DAE system in the extended state vector $\tilde{x}$ with {\it singular} Dirac structure and {\it regular} Lagrangian subspace.

The two above extensions may be {\it combined} as follows. Consider the representation of the Dirac algebraic constraints given in \eqref{Diracconstraint}, with extended state space $\X \times \Lambda$. Furthermore, consider the representation of the Lagrange algebraic constraints given in \eqref{Lagrangeconstraint}, with extended state space $\X \times \M$. Their combination yields the total extended space $\X \times \Lambda \times \M$, and the extended gpH DAE system
\bq
\label{merged}
\begin{bmatrix} I & 0 &0  \\ 0 & 0 & 0 \\ 0 & 0 & 0 \end{bmatrix}
\begin{bmatrix} \dot{x} \\ \dot{\lambda} \\ \dot{\mu} \end{bmatrix} =
\begin{bmatrix} J & G & 0 \\ -G^T & 0  & 0 \\ 0 & 0 & I \end{bmatrix}
\begin{bmatrix} Q & 0 & M \\ 0 & I & 0 \\ M^T & 0 & 0 \end{bmatrix}
\begin{bmatrix} x \\ \lambda \\\mu \end{bmatrix} 
\eq
The inclusion of the 'Lagrange multipliers' $\lambda$ and $\mu$ is especially useful for {\it simulation} of gpH DAE systems.

\begin{example}
Consider the system in Example \ref{ex4}, where we additionally impose as in Example \ref{ex1} the kinematic constraint $\dot{q}_1=0$.
The extended skew-symmetric map $\widetilde{J}$ as in \eqref{extskew} is simply given as
\bq
\widetilde{J}= \begin{bmatrix} 0 & 0 & 1 & 0 & 0 \\ 0 & 0 & 0 & 1 & 0 \\ -1 & 0 & 0 & 0 & 1\\ 0 & -1 & 0 & 0 & 0 \\
0 & 0 & -1 & 0 & 0 \end{bmatrix},
\eq
with $G^T= \begin{bmatrix}0 & 0 & 1 & 0 & 0 \end{bmatrix}$ and scalar Lagrange multiplier $\lambda$ corresponding to the constraint force for the kinematic constraint $\dot{q}_1=0$. The extended symmetric map $\widetilde{Q}$ as in \eqref{extsym} is given as
\bq
\widetilde{Q}= \begin{bmatrix} k_1 & -1 & 0 & 0 & 1 \\ -1 & 0 & 0 & 0 & -1 \\ 0 & 0 & \frac{1}{m_1} & 0 & 0\\ 0 & 0 & 0 & \frac{1}{m_2}  & 0 
\\
1 & -1 & 0 & 0 & 0 \end{bmatrix}
\eq
with scalar Lagrange multiplier $\mu$. Combining this as in \eqref{merged} yields the gpH DAE system
\bq
\begin{array}{rcl}
\begin{bmatrix} \dot{q}_1 \\ \dot{q}_2 \\ \dot{p}_1 \\ \dot{p}_2 \\ 0 \\ 0 \end{bmatrix} & = &
 \begin{bmatrix} 0 & 0 & 1 & 0 & 0 & 0 \\ 0 & 0 & 0 & 1 & 0  & 0\\ -1 & 0 & 0 & 0 & 1 & 0\\ 0 & -1 & 0 & 0 & 0 & 0 \\
0 & 0 & -1 & 0 & 0 & 0 \\ 0 & 0 & 0 & 0 & 0 & 1 \end{bmatrix} \cdot
\\
&&\begin{bmatrix} k_1 & -1 & 0 & 0 & 0 & 1 \\ -1 & 0 & 0 & 0 & 0 & -1 \\ 0 & 0 & \frac{1}{m_1} & 0 &  0 & 0\\ 0 & 0 & 0 & \frac{1}{m_2}  & 0 & 0 \\
0 & 0 & 0 & 0 & 1 & 0 \\
1 & -1 & 0 & 0 & 0 & 0 \end{bmatrix}
\begin{bmatrix} q_1 \\ q_2 \\ p_1 \\ p_2 \\ \lambda \\ \mu \end{bmatrix} 
\end{array}
\eq
\end{example}

\section{Conclusions}
%
Following \cite{barbero}, and inspired by \cite{bmxz}, we have given a geometric definition
of generalized port-Hamiltonian DAE systems, defined by pairs of Dirac structures
and Lagrangian subspaces. For physical models, the Dirac structure
corresponds to the interconnection structure of the system, while the Lagrangian subspace corresponds to the definition of their energy. This
generalizes the classical definition of port-Hamiltonian systems by symmetrizing
the role of energy and co-energy variables and allowing for degenerate energy or co-energy functions. In particular we analyzed their algebraic
constraints and representations as DAE systems using the kernel or image representations of both the
Dirac structure and the Lagrangian subspace. The further study
of this class of structured DAE systems, e.g. regularity and index
analysis, seems of great interest. Although for clarity of exposition we restricted attention to systems without energy-dissipation and external variables, the extension to generalized port-Hamiltonian systems {\it with} energy-dissipation and external
variables is straightforward by replacing the Hamiltonian function with a Lagrangian subspace. Important extensions concern the generalization to {\it nonlinear}
systems, replacing Lagrangian subspaces by Lagrangian
submanifolds (see \cite{barbero}), and to {\it distributed-parameter} systems.

\medskip

\noindent
{\bf Acknowledgements} The authors would like to thank Volker Mehrmann (TU Berlin) and Hans Zwart (University of Twente) for stimulating discussions on port-Hamiltonian DAEs.

\section{Appendix}

\begin{proposition}
\label{A}
A subspace $\L \subset \X \times \X^*$ with $\dim \X=n$ is a Lagrangian subspace if and only if there exist $n \times n$ matrices $P,S$ satisfying
\bq
\label{sp}
S^TP = P^TS , \; \rank \begin{bmatrix} S^T & P^T \end{bmatrix} =n
\eq
such that (see \eqref{image})
\bq
\label{image1}
\L=\{(x,e) \in \X \times \X^* \mid \exists z \in \Z=\mR^n \mbox{ s.t. }\begin{bmatrix} x \\ e \end{bmatrix} = \begin{bmatrix} P \\ S \end{bmatrix}z \}
\eq
\end{proposition}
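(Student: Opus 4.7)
The plan is to prove the two implications separately, patterning the argument after the analogous kernel/image characterization of Dirac structures already cited from \cite{dalsmo, courant}. First I would handle the easy ``if'' direction: assume matrices $P,S$ satisfy \eqref{sp} and set $\L$ equal to the image in \eqref{image1}. For arbitrary $(Pz_1, Sz_1), (Pz_2, Sz_2) \in \L$, a direct computation gives
\begin{equation*}
\langle (Pz_1, Sz_1), (Pz_2, Sz_2) \rangle_- = z_1^T S^T P z_2 - z_2^T S^T P z_1 = z_1^T (S^TP - P^TS) z_2 = 0,
\end{equation*}
where the last equality uses $S^TP = P^TS$. Hence $\L \subset \L^{\pperp_-}$. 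The rank condition $\rank \begin{bmatrix} S^T & P^T \end{bmatrix} = n$ is equivalent (by transposition) to injectivity of the linear map $z \mapsto (Pz, Sz)$, so $\dim \L = n$.

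Next I would close the ``if'' direction by invoking non-degeneracy of $\langle \cdot, \cdot \rangle_-$ on $\X \times \X^*$ (easily checked by pairing an isotropic vector against $(x,0)$ and $(0,e)$). Non-degeneracy yields $\dim V + \dim V^{\pperp_-} = 2n$ for every subspace $V$. Applying this to $\L$, together with $\dim \L = n$ and $\L \subset \L^{\pperp_-}$, forces $\dim \L^{\pperp_-} = n$ and therefore $\L = \L^{\pperp_-}$.

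For the converse, assume $\L$ is Lagrangian; by the Remark following the definition, $\dim \L = n$. Choose any basis $v_1, \ldots, v_n$ of $\L$, write $v_i = (p_i, s_i)$ with $p_i \in \X$ and $s_i \in \X^*$, and assemble the $n \times n$ matrices $P := [p_1 \; \cdots \; p_n]$ and $S := [s_1 \; \cdots \; s_n]$. By construction $\L$ coincides with the image in \eqref{image1}, linear independence of the $v_i$ yields the rank condition, and evaluating $\langle v_i, v_j \rangle_- = 0$ componentwise gives $s_i^T p_j - s_j^T p_i = 0$ for all $i,j$, i.e. $S^T P$ is symmetric, which is \eqref{sp}.

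The only mildly subtle point is the ``maximality from half-dimension plus isotropy'' step, which relies on the non-degeneracy of $\langle \cdot, \cdot \rangle_-$; everything else amounts to rewriting the defining conditions of $\L$ in matrix form, exactly mirroring the standard kernel/image argument for Dirac structures. I do not expect any genuine obstacle.
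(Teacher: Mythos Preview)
Your argument is correct and follows essentially the same route as the paper's own proof: both directions amount to writing elements of $\L$ as $(Pz,Sz)$ and reading off that isotropy with respect to $\langle\cdot,\cdot\rangle_-$ is equivalent to $S^TP=P^TS$, while the rank condition encodes $\dim\L=n$. If anything, your version is slightly more complete than the paper's, since you spell out the ``isotropic of half-dimension implies Lagrangian'' step via non-degeneracy of $\langle\cdot,\cdot\rangle_-$, which the paper leaves implicit.
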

\begin{proof}
The 'if' direction follows by checking that $\langle (x_1,e_1), (x_2,e_2) \rangle_-=0$ for any two pairs $(x_i,e_i)$ with $x_i=Pz_i, e_i=Sz_i, i=1,2$, and $P,S$ satisfying \eqref{sp}.\\
For the 'only if' direction we note that any $n$-dimensional subspace $\L$ can be written as in \eqref{image1} for certain $n \times n$ matrices $P,S$ satisfying $\rank \begin{bmatrix} S^T & P^T \end{bmatrix} =n$. Then take any two pairs $(x_i,e_i) \in \L$ with $x_i=Pz_i, e_i=Sz_i, i=1,2$. Since $\L$ is Lagrangian it follows that
\bq
\begin{array}{rcl}
0 &= &\langle (x_1,e_1), (x_2,e_2) \rangle_-= z_2^TS^TPz_1 - z_1^TS^TPz_2 =\\[2mm]
&& -z_1^T(S^TP - P^TS)z_2
 \end{array}
\eq
for all $z_1,z_2$, implying that $S^TP = P^TS$.
\end{proof}

\begin{proposition}
\label{C}
Consider any Lagrangian subspace $\L \subset \X \times X^*$ with kernel representation (see the previous Proposition \ref{A})
\bq
\L=\{(x,e) \in \X \times \X^* \mid S^Tx - P^Te=0 \}
\eq
for $n \times n$ matrices $P,S$ satisfying \eqref{sp}. Suppose $\rank P=m \leq n=\dim \X$.
Then there exists an $m$-dimensional sub-vector $x_1$ of $x \in \X$, and a complementary $n-m$-dimensional sub-vector $e_2 \in \X^*$ such that $\L$ is represented as
\bq
\L=\{(x,e) \in \X \times \X^* \mid \begin{bmatrix} e_1 \\ x_2 \end{bmatrix} = \widehat{Q} \begin{bmatrix} x_1 \\ e_2 \end{bmatrix} \}
\eq
with
\bq
\label{signature}
\widehat{Q}^T \begin{bmatrix} I_m & 0 \\ 0 & - I_{n-m} \end{bmatrix}= \begin{bmatrix} I_m & 0 \\ 0 & - I_{n-m} \end{bmatrix}\widehat{Q}\eq
\end{proposition}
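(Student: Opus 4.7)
The plan is to convert the image representation from Proposition~\ref{A} into the required graph representation after a careful choice of index partition. Write each element of $\L$ as $(Pz,Sz)$ with $S^TP=P^TS$, so $\pi(\L)=\operatorname{range}(P)$ has dimension $m=\rank P$. Since $P$ has $m$ linearly independent rows, pick an index set $I\subset\{1,\dots,n\}$ of size $m$ for which the submatrix $P_I$ has rank $m$; equivalently, with $V_2:=\operatorname{span}\{e_j:j\in I^c\}$, the subspaces $\pi(\L)$ and $V_2$ are complementary in $\X$. Declare $x_1:=x_I$, $x_2:=x_{I^c}$, $e_1:=e_I$, $e_2:=e_{I^c}$.

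The central linear-algebra step is to show that the $n\times n$ matrix $T:=\begin{bmatrix}P_I\\ S_{I^c}\end{bmatrix}$ is invertible, since this will permit $(x_1,e_2)$ to parametrize $\L$ and thereby define $\widehat{Q}:=\begin{bmatrix}S_I\\ P_{I^c}\end{bmatrix}T^{-1}$, delivering the asserted graph representation. Suppose $z\in\ker T$ and set $(x,e):=(Pz,Sz)\in\L$. Then $x_I=0$ together with $x\in\pi(\L)$ forces $x\in\pi(\L)\cap V_2=0$, so $Pz=0$. Moreover $(0,e)\in\L$, and the isotropy of $\L$ under $\langle\cdot,\cdot\rangle_-$ tested against every $(x',e')\in\L$ gives $\langle e,x'\rangle=0$, i.e.\ $e\in\pi(\L)^\perp$; combined with $e_{I^c}=0$, i.e.\ $e\in V_2^\perp$, this forces $e\in(\pi(\L)+V_2)^\perp=\X^\perp=0$. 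Hence $Sz=0$ as well, and the rank condition $\rank[S^T\;P^T]=n$ yields $z=0$.

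It then remains to verify the signature symmetry~\eqref{signature}. Decompose $\widehat{Q}$ in blocks $\begin{bmatrix}Q_{11}&Q_{12}\\ Q_{21}&Q_{22}\end{bmatrix}$ conforming to the $m+(n-m)$ split. Parametrizing two generic points of $\L$ by $u=(x_1,e_2)$ and $u'=(x_1',e_2')$ and expanding the Lagrangian identity $\langle e,x'\rangle-\langle e',x\rangle=0$ produces four kinds of bilinear terms in these parameters; setting each to zero reads off $Q_{11}^T=Q_{11}$, $Q_{22}^T=Q_{22}$ and $Q_{21}=-Q_{12}^T$. A short block multiplication against $J:=\operatorname{diag}(I_m,-I_{n-m})$ then gives $\widehat{Q}^TJ=J\widehat{Q}$, which is \eqref{signature}. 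The main obstacle is the invertibility of $T$: dimension counting alone does not suffice, and one must genuinely use both the maximality of $\L$ (through $\rank[S^T\;P^T]=n$) and the Lagrangian-specific implication $(0,e)\in\L\Rightarrow e\perp\pi(\L)$.
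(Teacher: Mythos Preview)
Your proof is correct and reaches the same conclusion as the paper, but the central step---invertibility of $T=\begin{bmatrix}P_I\\ S_{I^c}\end{bmatrix}$---is handled by a genuinely different argument. The paper works on the \emph{range} side: writing $P=\begin{bmatrix}P_1\\ P_2\end{bmatrix}$ with $P_1$ of full row rank $m$, it expands the algebraic identity $S^TP=P^TS$ blockwise to obtain $\im S_1^T\subset\im P_1^T+\im S_2^T$, and since also $\im P_2^T\subset\im P_1^T$, it concludes $\rank\begin{bmatrix}S_2^T & P_1^T\end{bmatrix}=\rank\begin{bmatrix}S^T & P^T\end{bmatrix}=n$, whence $\begin{bmatrix}P_1\\ S_2\end{bmatrix}$ is invertible. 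You instead work on the \emph{kernel} side and exploit the geometric content of the Lagrangian condition: from $z\in\ker T$ you first get $Pz=0$ via complementarity of $\pi(\L)$ and $V_2$, and then invoke the implication $(0,e)\in\L\Rightarrow e\in\pi(\L)^\perp$ (a use of isotropy that goes beyond the bare symmetry of $S^TP$) together with $e\in V_2^\perp$ to force $Sz=0$. Both routes are short; the paper's is purely matrix-algebraic and requires no reference to $\pi(\L)$ or annihilators, while yours is coordinate-free in spirit and generalizes cleanly to Lagrangian subspaces not presented by a specific $(P,S)$ pair. Your derivation of the signature relation~\eqref{signature} by expanding the Lagrangian identity in the $(x_1,e_2)$ parameters is essentially the same as the paper's.
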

\begin{proof}
The proof resembles the proof of a similar statement for Dirac structures in \cite{blochcrouch}.
Write, possibly after row permutations of $P$, $P^T = \begin{bmatrix} P^T_1 & P^T_2 \end{bmatrix}$ with $P_1$ having $m$ rows and $\rank P_1 = \rank P$. Then $\im P^T_2 \subset \im P^T_1$. Furthermore, $S^TP = P^TS$ yields
\bq
\label{sp12}
S^T_1P_1 + S^T_2P_2 = P^T_1S_1 + P^T_2S_2
\eq
Combined with surjectivity of $P_1$ and $\im P^T_2 \subset \im P^T_1$ this yields $\im S^T_1 \subset \im P^T_1 + \im S^T_2$. Hence
\bq
\rank \begin{bmatrix} S^T_2 & P^T_1 \end{bmatrix} = \rank \begin{bmatrix} S^T_1 & S^T_2 & P^T_1 & P^T_2 \end{bmatrix} =n,
\eq
thus implying that $\begin{bmatrix} S^T_2 & P^T_1 \end{bmatrix}$ is invertible. In view of \eqref{image1} we have
\bq
\begin{bmatrix} e_1 \\ x_2 \end{bmatrix} = \begin{bmatrix} S_1 \\ P_2 \end{bmatrix}z, \; 
\begin{bmatrix} x_1 \\ e_2 \end{bmatrix} = \begin{bmatrix} P_1 \\ S_2 \end{bmatrix}z
\eq
implying that
\bq
\begin{bmatrix} e_1 \\ x_2 \end{bmatrix} = \begin{bmatrix} S_1 \\ P_2 \end{bmatrix} \left(\begin{bmatrix} P_1 \\ S_2 \end{bmatrix}\right)^{-1}\begin{bmatrix} x_1 \\ e_2 \end{bmatrix} =: \widehat{Q} \begin{bmatrix} x_1 \\ e_2\end{bmatrix}
\eq
Since $\L$ is Lagrangian it follows that for all $(x_j,e_j) \in \L, j=a,b$
\bq
x^T_be_a= x^T_a e_b
\eq
Writing out $x_j= \begin{bmatrix} x_{j1} \\ x_{j2} \end{bmatrix}$ and $e_j= \begin{bmatrix} e_{j1} \\ e_{j2} \end{bmatrix}$ this yields
\bq
x^T_{b1}e_{a1} -x^T_{a2}e_{b2} = x^T_{a1}e_{b1} - x^T_{b2}e_{a2}
\eq
implying equality \eqref{signature}.

\end{proof}

\begin{proposition}
\label{B}
Let $\L \subset \X \times \X^*$ be a Lagrangian subspace. Then 
\bq
\label{bilinear}
\Sigma (\pi(v),\pi(w)) := <\pi^*(v) \mid \pi (w)>, \quad v,w \in \L
\eq
is a well-defined and symmetric bilinear form on $\pi(\L)$. Furthermore, the symmetric map induced by $\Sigma$ can be extended to the symmetric map $\widetilde{Q}$ as in \eqref{extsym} with $\ker M^T=\pi(\L)$, in such a way that $\L$ given by \eqref{Lagrangeconstraint}.
\end{proposition}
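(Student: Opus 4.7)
The plan is to establish the three assertions in sequence. First, well-definedness and symmetry of $\Sigma$ both flow from the single fact $\L=\L^{\pperp_-}$. For well-definedness, suppose $v,v'\in\L$ satisfy $\pi(v)=\pi(v')$; then $v-v'=(0,\eta)$ for some $\eta\in\X^*$, and pairing $v-v'$ with any $w\in\L$ under $\langle\cdot,\cdot\rangle_-$ gives $\langle\eta\mid\pi(w)\rangle=0$, so the representative does not matter. The analogous argument handles the second slot. Symmetry of $\Sigma$ is then immediate: the identity $\langle v,w\rangle_-=\Sigma(\pi(v),\pi(w))-\Sigma(\pi(w),\pi(v))$ vanishes on $\L\times\L$ by the Lagrangian property.

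To construct the extension, I first identify the kernel of $\pi|_\L$. A pair $(0,\eta)$ lies in $\L=\L^{\pperp_-}$ precisely when $\langle\eta\mid\pi(w)\rangle=0$ for all $w\in\L$, i.e.\ when $\eta$ annihilates $V:=\pi(\L)$. Hence $\L\cap(\{0\}\times\X^*)=\{0\}\times V^0$, where $V^0\subset\X^*$ is the annihilator of $V$. Setting $k:=\dim V^0=n-\dim V$, I choose any $n\times k$ matrix $M$ whose columns form a basis of $V^0$; then $\im M=V^0$ and $\ker M^T=V=\pi(\L)$, exactly as claimed.

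For the symmetric $Q$, I fix a complement $W$ of $V$ in $\X$, let $Q_0:V\to V^*$ be the symmetric linear map induced by $\Sigma$, and extend $Q_0$ to a symmetric $Q:\X\to\X^*$ in the splitting $\X=V\oplus W$ by declaring the $W$-diagonal block and the off-diagonal blocks to vanish. It then remains to verify that $\L=\{(x,e)\mid M^Tx=0,\;\exists\mu\text{ such that }e=Qx+M\mu\}$. The $(\subseteq)$ inclusion is direct: for $(x,e)\in\L$ one has $x\in V$, hence $M^Tx=0$, and the identity $\langle e\mid y\rangle=\Sigma(x,y)=\langle Q_0x\mid y\rangle$ for $y\in V$ forces $e-Qx\in V^0=\im M$. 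For $(\supseteq)$, given $x\in V$, pick any $v\in\L$ with $\pi(v)=x$; the same identity shows $\pi^*(v)-Qx\in\im M$, and then $(0,e-\pi^*(v))\in\L\cap(\{0\}\times\X^*)$, so that $(x,e)=v+(0,e-\pi^*(v))\in\L$. The dimension count $\dim V+\dim V^0=n=\dim\L$ confirms we have captured all of $\L$. The main technical point is this $(\supseteq)$ direction, where I must simultaneously use the first-step identification $\L\cap(\{0\}\times\X^*)=\{0\}\times V^0$ and the fact that $\pi^*(v)|_V$ is determined by $x$ through $\Sigma$; once those are in hand, the extension of $Q_0$ to all of $\X$ is essentially cosmetic, since any symmetric extension differs from the chosen one by a map taking values in $V^0=\im M$, which is absorbed into the Lagrange multiplier $\mu$.
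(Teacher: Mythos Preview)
Your proof is correct and follows essentially the same route as the paper: well-definedness and symmetry from the isotropy of $\L$, the identification $\L\cap(\{0\}\times\X^*)=\{0\}\times\pi(\L)^0$, extension of the induced symmetric map on $\pi(\L)$ to all of $\X$, and the verification that $(x,e)\in\L$ iff $M^Tx=0$ and $e-Qx\in\im M$. Your treatment is in fact more complete than the paper's, which only sketches the $(\subseteq)$ inclusion and defers the extension construction to a reference, whereas you supply both inclusions, an explicit extension of $Q_0$, and the remark that the choice of extension is immaterial modulo $\im M$.
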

\begin{proof}
In order to prove that $\Sigma$ is {\it well-defined} let $v_1,v_2$ be such that $\pi(v_1)=\pi(v_2)$. Then $v:=v_1 - v_2 \in \L$ satisfies $\pi(v)=0$, and thus for any $w \in \L$
\bq
<\pi^*(v) \mid \pi(w) > = <\pi^*(w) \mid \pi(v) > =0
\eq
showing that indeed $<\pi^*(v_1) \mid \pi (w)> = <\pi^*(v_2) \mid \pi (w)>$ for any $w \in \L$. Symmetry of $\Sigma$ directly follows from
$<\pi^*(v) \mid \pi(w) > = <\pi^*(w) \mid \pi(v) >$ for any two $v,w \in \L$. As done in \cite{dalsmo} for the Dirac structure case we may extend the symmetric map induced by $\Sigma$ to the symmetric map $Q$ as in the left-upper block of \eqref{extsym}. Since $\L$ is Lagrangian it easily follows that $\L \cap (0 \times \X^*)= \pi(\L)^{\perp}$ with ${}^{\perp}$ denoting the orthogonal complement with respect to the duality pairing between $\X$ and $\X^*$. Define $M$ such that $\ker M^T=\pi(\L)$. Now, let $(x,e) \in \L$. Then $x \in \ker M^T=\pi(\L)$ and $e=Qx$ modulo $(\ker M^T)^{\perp}= \im M$, and thus $\L$ is indeed given by \eqref{Lagrangeconstraint}.
\end{proof}


\begin{thebibliography}{00}
\bibitem{barbero}
M. Barbero-Linan, H. Cendra, E. Garcia-Torano Andres, D. Martin de Diego,
Morse families and Dirac systems,
arXiv:1804.04949v1

\bibitem{bmxz}
C. A. Beattie, V. Mehrmann, H. Xu, and H. Zwart, Port-Hamiltonian descriptor systems. Preprint 06-2017,
Instit\"ut f\"ur Mathematik, TU Berlin, 2017, arXiv:1705.09081
%
\bibitem{bmd}
C. A. Beattie, V. Mehrmann, and P. Van Dooren,
Robust port-Hamiltonian representations of passive systems,
arXiv:1801.05018
%
\bibitem{blochcrouch}
A.M. Bloch, P.E. Crouch.
\newblock Representation of {D}irac structures on vector spaces and nonlinear
  lcv-circuits.
\newblock In H. Hermes G.~Ferraya, R. Gardner and H. Sussman, editors, {\em Proc.
  of Symposia in Pure mathematics, Differential Geometry and Control Theory},
  volume~64, pages 103--117, 1999.

\bibitem{courant}
T.J. Courant, Dirac manifolds, {\it Trans. Amer. Math. Soc.}, 319, pp. 631--661, 1990.

\bibitem{dalsmo}
M. Dalsmo, A.J. van der Schaft, On representations and integrability of
mathematical structures in energy-conserving physical systems, {\it SIAM
J. Contr. Opt.}, 37, pp. 54--91, 1999.

\bibitem{mms}
C. Mehl, V. Mehrmann, and P. Sharma. Stability radii for linear Hamiltonian systems with dissipation under
structure-preserving perturbations. {\it SIAM Matrix Anal. Appl.}, 37, pp.1625--1654, 2016.
%
\bibitem{mmw}
V. Mehrmann, C. Mehl, M. Wojtylak, Linear algebra properties of dissipative Hamiltonian descriptor systems, 
arXiv:1801.02214.
%
\bibitem{scholz}
L. Scholz. Condensed forms for linear port-Hamiltonian descriptor systems. Preprint 09-2017, Instit\"ut f\"ur Mathematik, TU Berlin, 2017.
%
\bibitem{gsbm}
G. Golo, A. J. van der Schaft, P. C. Breedveld, and B. M. Maschke, Hamiltonian formulation of bond graphs,
In A. Rantzer R. Johansson, editor, {\it Nonlinear and Hybrid Systems in Automotive Control}, pp. 351--372.
Springer, Heidelberg, 2003.
%
\bibitem{phDAE}
A. J. van der Schaft. Port-Hamiltonian differential-algebraic systems. In {\it Surveys in Differential-Algebraic Equations I}, pp. 173 -- 226. Springer-Verlag, 2013.
%
\bibitem{passivitybook}
A.J. van der Schaft,
\newblock \emph{$L_2$-Gain and Passivity Techniques in Nonlinear Control}, 3rd Edition 2017.
\newblock Springer International.
%
\bibitem{NOW}
A. van der Schaft and D. Jeltsema, 
Port-Hamiltonian Systems Theory: An Introductory Overview,
{\it Foundations and Trends in Systems and Control},
vol. 1, no. 2/3, pp. 173--378, 2014.
%
\bibitem{vanderschaftmaschkearchive}
A.J. van der Schaft, B.M. Maschke, The Hamiltonian formulation of energy conserving physical systems with external ports, {\it Archiv f\"{u}r Elektronik und \"{U}bertragungstechnik, 49}, pp. 362--371, 1995.
\end{thebibliography}
\end{document}